\newtheorem{theorem}{Theorem}
\newtheorem{lemma}{Lemma}
\newtheorem{corollary}{Corollary}
\theoremstyle{definition}
\newtheorem{definition}{Definition}
\newtheorem{example}{Example} % The '*' makes it unnumbered
\begin{document}

 \tolerance2500

\title{\Large{\textbf{Schr\" oder T-quasigroups}}}
\author{\normalsize {V.A.~Shcherbacov}
}

 \maketitle

\begin{abstract}
  We prolong research of Schr\" oder quasigroups and Schr\" oder T-quasigroups.
  \medskip

\noindent \textbf{2000 Mathematics Subject Classification:} 20N05

\medskip

\noindent \textbf{Key words and phrases:} quasigroup, loop, groupoid, Schr\" oder quasigroups, Schr\"oder identity.
\end{abstract}

\bigskip

\section{Introduction}

 This paper  is connected in main with the following {\bf Belousov problem 1 \cite{BELPAR_05}} : Find a complete characterization of groups isotopic to quasigroups which satisfy one of the identities:  $x(y \cdot yx) = y$, $ xy \cdot yx = y$ (Stein 3rd law \cite{AS_57}), $ xy \cdot yx = x$ (Schr\" oder 2nd law \cite{AS_57}).

Some of these identities guarantee that a quasigroup is orthogonal to its parastrophe \cite{BELPAR_05, 2017_Scerb}.

Here we find conditions  imposed  not only on  a group $G$ over which various Schr\" oder  quasigroups are defined, but on the group of automorphisms $Aut \, G$, over which various T-objects (quasigroups and groupoids)  are defined.

Necessary definitions can be found in \cite{VD, RHB,  HOP, 2017_Scerb}.

\begin{definition} \label{MAIN_QUAS_DEF} Binary groupoid $(Q, \circ)$ is called a left quasigroup if for any ordered pair $(a, b)\in Q^2$
there exist the unique solution $x \in Q$ to the equation $a \circ x = b$ \cite{VD}.
\end{definition}

 \begin{definition} \label{MAIN_QUAS_DEF1}
 Binary groupoid $(Q, \circ)$ is called a right quasigroup if for any ordered pair $(a, b)\in Q^2$
there exist the unique solution $y \in Q$ to the equation $y \circ a = b$ \cite{VD}.
\end{definition}

\begin{definition}
A quasigroup $(Q,\cdot)$ with an   element $1 \in Q$, such that $1\cdot x = x\cdot 1 = x$ for all $x\in Q$, is called a {\it loop}. \label{Loop_Existenc}
\end{definition}

 \begin{definition} \label{MEDIAL_DEF}
 Binary groupoid $(Q, \cdot)$ is called medial  if this groupoid satisfies the following  medial identity:
  \begin{equation} \label{medial}
  xy\cdot uv = xu\cdot yv
  \end{equation}
  for all  $x, y, u, v \in Q$  \cite{VD}.
\end{definition}

We recall

\begin{definition}  Quasigroup $(Q, \cdot)$  is a T-quasigroup if and only if there exists an abelian group $(Q, +)$,
its automorphisms $\varphi$  and $\psi$  and a fixed element $a\in  Q$  such that $x\cdot y = \varphi x + \psi y + a$
  for all $x,  y \in  Q$ \cite{tkpn}.
\end{definition}

 A T-quasigroup with the additional condition $\varphi \psi = \psi \varphi$  is  medial.

\begin{definition} \label{Def-6}
Garrett Birkhoff \cite{BIRKHOFF} has defined an equational quasigroup as an algebra with
three binary operations $(Q, \cdot, \slash, \backslash)$ that satisfies the following six identities:
\begin{equation}
x \cdot(x\backslash  y) =y,
\end{equation}

\begin{equation}
(y \slash x)\cdot x = y,
\end{equation}

\begin{equation}
x\backslash (x \cdot y) = y,
\end{equation}

\begin{equation}
(y \cdot x) \slash x = y,
\end{equation}

\begin{equation}
x\slash (y  \backslash x) =  y,
\end{equation}

\begin{equation}
(x\slash y)\backslash x = y.
\end{equation}
\end{definition}

Ernst Schr\" oder (a German mathematician mainly known for his work on algebraic logic) introduced and studied the following identity of generalized associativity \cite{ESchr}:
\begin{equation} \label{classicaL}
                                (y\cdot z) \backslash x =  z (x\cdot y).                                                                                                                                 \end{equation}

See also \cite{Ibragimov_1, IBRAGIMOV,  KUZ_AV} for detail. Notice, article \cite{KUZ_AV} is well written and very detailed.

In the quasigroup case the identity (\ref{classicaL}) is equivalent to the following identity \cite{{PUSH_14, 2017_Scerb}}:
\begin{equation}  \label{SCH_1}
                                                                      (y\cdot z) \cdot(z\cdot(x\cdot y)) = x                                                 \end{equation}

If in idempotent quasigroup $(Q;\cdot)$ the identity (\ref{SCH_1}) we put $x =  y$, then we obtain the following
standard  Schr\" oder identity:
\begin{equation} \label{2_61}
                                (x\cdot y)\cdot (y\cdot x) = x.
\end{equation}
\begin{definition}
Any quasigroup with the identity (\ref{2_61}) is called a Schr\" oder quasigroup.
\end{definition}

So we have  different objects that have name Schr\" oder.  Namely,

(i)
the following identity of generalized associativity \cite{ESchr}:

                                $(y\cdot z) \backslash x =  z (x\cdot y)$  (\ref{classicaL});

(ii) the Schr\" oder identity of generalized associativity in quasigroups: (\ref{SCH_1});

 (iii)   Schr\" oder identity (Schr\" oder 2-nd identity \cite{AS_57})  $(x\cdot y)\cdot (y\cdot x) = x$ (\ref{2_61});

Identity $(x\cdot y)\cdot (y\cdot x) = y$ is named by Albert Sade \cite{AS_57} as  Stein 3-rd identity.  \label{Belo-4}

Each of these  identities deserves a separate study in the class
of groupoids, left (right) quasigroups, in the classes of quasigroups and of T-quasigroups.

\subsection{ Schr\" oder identity of generalized associativity}

It is convenient to call this identity the Schr\" oder identity of generalized associativity. Often various variants of associative identity, which are true in a quasigroup, guarantee that this quasigroup is a loop.

It is not so in the case with the identity.  We give an example of quasigroup which is not a loop  with the identity (\ref{SCH_1}) \cite{PUSH_14}. See also \cite{2017_Scerb}. A quasigroup from  this example does not have left and right identity element.

Quasigroups with Schr\" oder identity of generalized associativity are not necessary idempotent and associative. See  the following example.

 \centerline{Table 1}
  $$
\begin{array}{l|llllllll}
\cdot &0	&1	&2	&3	&4 &5 &6 &7  \\
\hline
0 &1 &4 &7 &0 &6 &5 &2 &3 \\
1 &5 &2 &3 &6 &0 &1 &4 &7 \\
2 &0 &7 &4 &1 &5 &6 &3 &2 \\
3 &6 &3 &2 &5 &1 &0 &7 &4 \\
4 &4 &1 &0 &7 &3 &2 &5 &6  \\
5 &3 &6 &5 &2 &4 &7 &0 &1 \\
6 &7 &0 &1 &4 &2 &3 &6 &5 \\
7 &2 &5 &6 &3 &7 &4 &1 &0 \\
\end{array}
$$

The left cancellation (left division) groupoid with the identity (\ref{SCH_1}) and with the identity $(x\slash x=  y\slash y)$ (in a quasigroup this identity guarantees existence of the left identity element) is a commutative group of exponent  two \cite{PUSH_14}.

The similar results are true for the right case \cite{PUSH_14}.
In this case we use the identity $(x\backslash x=  y\backslash y)$.

It is clear that this result is true for any quasigroup with the left or right  identity element.

Notice, any 2-group $(G, +)$ (in such group $x+x=0$ for any $x\in G$) satisfies Schr\" oder identity of generalized associativity.

\subsection{ Schr\" oder identity  $(x\cdot y)\cdot (y\cdot x) = x$}

We recall information from \cite{CCL_80}.

\begin{lemma}
 In any Schr\" oder quasigroup $(Q, \cdot)$  the equality $x \cdot x = y \cdot y$ implies $x = y$, and the equality $x \cdot y = y \cdot x$ implies $x = y$.
\end{lemma}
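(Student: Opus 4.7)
The plan is to derive both cancellation-type properties by very direct substitutions into the defining Schr\"oder identity $(x\cdot y)\cdot(y\cdot x)=x$, exploiting the symmetry of its two sides under the swap $x\leftrightarrow y$.

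For the first implication, I would substitute $y:=x$ in the Schr\"oder identity to obtain $(x\cdot x)\cdot(x\cdot x)=x$, and similarly $(y\cdot y)\cdot(y\cdot y)=y$. Thus the map $z\mapsto z\cdot z$ has a left inverse given by the same squaring map (applied once more), so it is injective: from the hypothesis $x\cdot x=y\cdot y$ one immediately gets
\[
x=(x\cdot x)\cdot(x\cdot x)=(y\cdot y)\cdot(y\cdot y)=y.
\]

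For the second implication, I would write down two instances of the Schr\"oder identity: $(x\cdot y)\cdot(y\cdot x)=x$ and, swapping the roles of $x$ and $y$, $(y\cdot x)\cdot(x\cdot y)=y$. The hypothesis $x\cdot y=y\cdot x$ turns both left-hand sides into the same element $(x\cdot y)\cdot(x\cdot y)$, which must then equal both $x$ and $y$. Hence $x=y$.

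Since both arguments use nothing beyond a single substitution and (in the second case) the fact that the identity also applies after interchanging $x$ and $y$, there is no real obstacle; the only thing to be careful about is to keep the bracketing clear and to justify that the squaring idempotent identity $(x\cdot x)\cdot(x\cdot x)=x$ is a genuine consequence of (\ref{2_61}), which it is by plain substitution $y:=x$.
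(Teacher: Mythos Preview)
Your proof is correct and follows essentially the same approach as the paper: both parts use the substitution $y:=x$ in the Schr\"oder identity to get $(x\cdot x)\cdot(x\cdot x)=x$ for the first claim, and the two instances $(x\cdot y)\cdot(y\cdot x)=x$ and $(y\cdot x)\cdot(x\cdot y)=y$ together with the hypothesis $x\cdot y=y\cdot x$ for the second. Your write-up is in fact slightly more explicit than the paper's, but the underlying argument is identical.
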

\begin{proof} Suppose $(x \cdot x) = (y \cdot y)$.
Then from the identity (\ref{2_61}) we have $x  = (x \cdot x) \cdot (x \cdot x) =  (y \cdot y) \cdot (y \cdot y) = y.$

Suppose $x \cdot y = y \cdot x$. Then we have $y = (x \cdot y)\cdot (y\cdot  x)  = (y \cdot x) \cdot  (x \cdot y) =  x$.
\end{proof}

\begin{theorem}  \label{TheoremL_80}
  Necessary condition for the existence of an idempotent Schr\" oder quasigroup $(Q,  \cdot)$ of order v is that v = 0 or = 1  pmod 4 \cite{CCL_80}.
\end{theorem}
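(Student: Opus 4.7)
The plan is to set up a free action of the Klein four-group on the off-diagonal pairs of $Q^{2}$ and read off the divisibility $4 \mid v(v-1)$.

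Concretely, on the set $\Omega = Q^{2} \setminus \{(x,x) : x \in Q\}$ I would introduce two maps
\[
\sigma(x,y) = (x\cdot y,\; y\cdot x), \qquad \tau(x,y) = (y,x).
\]
The preceding lemma (second half) shows that $x \neq y$ implies $x\cdot y \neq y\cdot x$, so $\sigma$ really maps $\Omega$ to $\Omega$; clearly $\tau$ does as well. The Schr\"oder identity (\ref{2_61}) together with its swap $(y\cdot x)\cdot(x\cdot y)=y$ gives $\sigma^{2}=\mathrm{id}$, and $\tau^{2}=\mathrm{id}$ is trivial. A direct check shows $\sigma\tau(x,y)=(y\cdot x,\; x\cdot y)=\tau\sigma(x,y)$, so $G = \langle \sigma,\tau\rangle \cong \mathbb{Z}_{2}\times\mathbb{Z}_{2}$ acts on $\Omega$.

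Next I would verify that every non-identity element of $G$ is fixed-point-free on $\Omega$. For $\tau$ this is immediate since $\Omega$ excludes the diagonal. For $\sigma$, a fixed point would mean $x\cdot y = x$; combining with idempotency $x\cdot x = x$ and right cancellation in the quasigroup forces $y=x$, contradicting $(x,y)\in\Omega$. The same cancellation argument rules out fixed points of $\sigma\tau$ (which would require $y\cdot x = x$). Hence every stabilizer is trivial and every $G$-orbit on $\Omega$ has exactly $|G| = 4$ elements.

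Therefore $4 \mid |\Omega| = v(v-1)$. Since $\gcd(v,v-1)=1$, one factor must absorb all of $4$: if $v$ is even then $4 \mid v$, i.e.\ $v \equiv 0 \pmod 4$; if $v$ is odd then $4 \mid v-1$, i.e.\ $v \equiv 1 \pmod 4$. The only place where anything nontrivial happens is the fixed-point analysis, and the main obstacle is precisely making sure that idempotency is what rules out fixed points of $\sigma$ and $\sigma\tau$ --- without idempotency one could only eliminate fixed points of $\tau$, which by itself gives no new divisibility information beyond the trivial fact that $v(v-1)$ is even.
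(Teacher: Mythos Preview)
Your argument is correct and complete. Note that the paper does not actually supply a proof of this theorem: it cites the result from \cite{CCL_80} and remarks that those proofs draw on \cite{WILSON_Rich}. (Wilson's existence theory is in fact what powers the \emph{sufficiency} direction in \cite{CCL_80}; the necessity you are asked to prove is precisely the elementary counting argument you give.) So your Klein four-group action on $\Omega=Q^2\setminus\{(x,x)\}$ is filling in what the paper omits, and it is essentially the standard proof.

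One small terminological slip: in the $\sigma$ case you have $x\cdot y = x = x\cdot x$ and need to cancel the common \emph{left} factor $x$ to deduce $y=x$; that is left cancellation, not right. In the $\sigma\tau$ case ($y\cdot x = x = x\cdot x$) right cancellation is indeed the correct name. Since a quasigroup is both left- and right-cancellative this does not affect the validity of the argument. You might also remark explicitly that the fixed-point-free conclusions force $\sigma,\tau,\sigma\tau$ to be three distinct non-identity elements (for $v\ge 2$), so that $|G|=4$ rather than a proper quotient of $\mathbb{Z}_2\times\mathbb{Z}_2$; this is implicit in your orbit-size claim but worth one sentence.
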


In the proofs of Theorems \ref{TheoremL_80}   are used sufficiently results from \cite{WILSON_Rich}.

\begin{example} Define groupoid $GF(2^r, \ast) $  over the Galois field $GF( 2^r)$ in the following way:
\begin{equation}
x \ast y = a \cdot x +(a+1) y,
 \end{equation}
  where $x, y \in  GF(2^r)$, the element $a$ is a fixed element of the set $(GF(2^r))$, the operations $+$ and  $\cdot$ are binary operations of this field.

The groupoid $(GF(2^r), \ast)$ is an idempotent medial Schr\" oder quasigroup \cite{CCL_80}.
\end{example}

\begin{example}
In \cite{CCL_80} it is mentioned that do not exist Schr\" oder quasigroups $(Q, \cdot)$ of order 5.
Using Mace 5 we construct  quasigroups with Stein 3-rd law   of order 5 (left loop  and idempotent quasigroup).

\begin{center}
\begin{tabular}{r|rrrrr}
$\star$ & 0 & 1 & 2 & 3 & 4\\
\hline
    0 & 0 & 1 & 2 & 3 & 4 \\
    1 & 2 & 3 & 1 & 4 & 0 \\
    2 & 3 & 0 & 4 & 2 & 1 \\
    3 & 4 & 2 & 0 & 1 & 3 \\
    4 & 1 & 4 & 3 & 0 & 2
\end{tabular}
\begin{tabular}{r|rrrrr}
$\cdot$ & 0 & 1 & 2 & 3 & 4\\
\hline
    0 & 0 & 2 & 3 & 4 & 1 \\
    1 & 3 & 1 & 4 & 2 & 0 \\
    2 & 4 & 0 & 2 & 1 & 3 \\
    3 & 1 & 4 & 0 & 3 & 2 \\
    4 & 2 & 3 & 1 & 0 & 4
\end{tabular}
\end{center}
\end{example}

We remark, in quasigroups from Schr\" oder identity does  not follow Stein 3-rd  (quasigroup $(Q, \ast)$) and vice.

In quasigroup $(Q, \ast)$ is true identity (\ref{2_61}) and is not true identity (\ref{Belo-4}),
in quasigroup $(Q, \circ)$ is true identity (\ref{Belo-4})  and is not true identity (\ref{2_61}).

\begin{center}
\begin{tabular}{r|rrrr}
$\ast$ & 0 & 1 & 2 & 3\\
\hline
    0 & 0 & 1 & 3 & 2 \\
    1 & 2 & 3 & 1 & 0 \\
    2 & 1 & 0 & 2 & 3 \\
    3 & 3 & 2 & 0 & 1
\end{tabular} \hspace{.5cm}
\begin{tabular}{r|rrrr}
$\circ$ & 0 & 1 & 2 & 3\\
\hline
    0 & 0 & 2 & 3 & 1 \\
    1 & 3 & 1 & 0 & 2 \\
    2 & 1 & 3 & 2 & 0 \\
    3 & 2 & 0 & 1 & 3
\end{tabular}
\end{center}

\subsection{Quasigroups with identity $x(y \cdot yx) = y$}

Next Cayley tables of quasigroups with identity $x(y \cdot yx) = y$ were obtained using Mace \cite{MAC_CUNE_MACE}.

\begin{tabular}{r|rrr}
*: & 0 & 1 & 2\\
\hline
    0 & 0 & 1 & 2 \\
    1 & 2 & 0 & 1 \\
    2 & 1 & 2 & 0
\end{tabular}
\begin{tabular}{r|rrrr}
*: & 0 & 1 & 2 & 3\\
\hline
    0 & 0 & 2 & 3 & 1 \\
    1 & 1 & 3 & 2 & 0 \\
    2 & 2 & 0 & 1 & 3 \\
    3 & 3 & 1 & 0 & 2
\end{tabular}
\begin{tabular}{r|rrrrr}
*: & 0 & 1 & 2 & 3 & 4\\
\hline
    0 & 0 & 2 & 4 & 1 & 3 \\
    1 & 2 & 1 & 3 & 4 & 0 \\
    2 & 4 & 3 & 2 & 0 & 1 \\
    3 & 1 & 4 & 0 & 3 & 2 \\
    4 & 3 & 0 & 1 & 2 & 4
\end{tabular}
\begin{tabular}{r|rrrrrrr}
*: & 0 & 1 & 2 & 3 & 4 & 5 & 6\\
\hline
    0 & 0 & 2 & 3 & 1 & 5 & 6 & 4 \\
    1 & 4 & 1 & 6 & 0 & 3 & 2 & 5 \\
    2 & 5 & 0 & 2 & 4 & 6 & 1 & 3 \\
    3 & 6 & 5 & 0 & 3 & 1 & 4 & 2 \\
    4 & 1 & 6 & 5 & 2 & 4 & 3 & 0 \\
    5 & 2 & 3 & 4 & 6 & 0 & 5 & 1 \\
    6 & 3 & 4 & 1 & 5 & 2 & 0 & 6
\end{tabular}

\begin{tabular}{r|rrrrrrrr}
*: & 0 & 1 & 2 & 3 & 4 & 5 & 6 & 7\\
\hline
    0 & 0 & 2 & 4 & 1 & 6 & 3 & 7 & 5 \\
    1 & 6 & 1 & 5 & 2 & 0 & 7 & 3 & 4 \\
    2 & 7 & 4 & 2 & 5 & 3 & 6 & 0 & 1 \\
    3 & 4 & 7 & 0 & 3 & 5 & 1 & 2 & 6 \\
    4 & 5 & 3 & 6 & 7 & 4 & 2 & 1 & 0 \\
    5 & 2 & 0 & 7 & 6 & 1 & 5 & 4 & 3 \\
    6 & 3 & 5 & 1 & 4 & 7 & 0 & 6 & 2 \\
    7 & 1 & 6 & 3 & 0 & 2 & 4 & 5 & 7
\end{tabular}
\begin{tabular}{r|rrrrrrrrr}
*: & 0 & 1 & 2 & 3 & 4 & 5 & 6 & 7 & 8\\
\hline
    0 & 0 & 1 & 2 & 3 & 4 & 5 & 6 & 7 & 8 \\
    1 & 2 & 0 & 1 & 8 & 6 & 4 & 5 & 3 & 7 \\
    2 & 1 & 2 & 0 & 7 & 5 & 6 & 4 & 8 & 3 \\
    3 & 4 & 5 & 6 & 0 & 3 & 7 & 8 & 1 & 2 \\
    4 & 3 & 7 & 8 & 4 & 0 & 1 & 2 & 5 & 6 \\
    5 & 8 & 3 & 7 & 6 & 2 & 0 & 1 & 4 & 5 \\
    6 & 7 & 8 & 3 & 5 & 1 & 2 & 0 & 6 & 4 \\
    7 & 6 & 4 & 5 & 2 & 8 & 3 & 7 & 0 & 1 \\
    8 & 5 & 6 & 4 & 1 & 7 & 8 & 3 & 2 & 0
\end{tabular}

\begin{tabular}{r|rrrrrrrrrrr}
*: & 0 & 1 & 2 & 3 & 4 & 5 & 6 & 7 & 8 & 9 & 10\\
\hline
    0 & 0 & 2 & 4 & 1 & 5 & 3 & 7 & 9 & 6 & 10 & 8 \\
    1 & 8 & 1 & 7 & 10 & 0 & 2 & 9 & 6 & 3 & 5 & 4 \\
    2 & 6 & 8 & 2 & 4 & 9 & 0 & 1 & 10 & 5 & 7 & 3 \\
    3 & 10 & 6 & 0 & 3 & 1 & 9 & 8 & 4 & 7 & 2 & 5 \\
    4 & 7 & 5 & 6 & 0 & 4 & 10 & 3 & 2 & 1 & 8 & 9 \\
    5 & 9 & 0 & 3 & 8 & 7 & 5 & 2 & 1 & 10 & 4 & 6 \\
    6 & 4 & 10 & 9 & 5 & 8 & 1 & 6 & 3 & 2 & 0 & 7 \\
    7 & 5 & 3 & 8 & 2 & 10 & 6 & 4 & 7 & 9 & 1 & 0 \\
    8 & 2 & 7 & 10 & 9 & 3 & 4 & 5 & 0 & 8 & 6 & 1 \\
    9 & 3 & 4 & 1 & 7 & 6 & 8 & 10 & 5 & 0 & 9 & 2 \\
    10 & 1 & 9 & 5 & 6 & 2 & 7 & 0 & 8 & 4 & 3 & 10
\end{tabular}

\begin{tabular}{r|rrrrrrrrrrrr}
*: & 0 & 1 & 2 & 3 & 4 & 5 & 6 & 7 & 8 & 9 & 10 & 11\\
\hline
    0 & 1 & 0 & 2 & 3 & 4 & 7 & 6 & 5 & 8 & 11 & 10 & 9 \\
    1 & 0 & 1 & 3 & 2 & 5 & 4 & 7 & 6 & 9 & 8 & 11 & 10 \\
    2 & 2 & 3 & 0 & 1 & 6 & 5 & 4 & 7 & 10 & 9 & 8 & 11 \\
    3 & 3 & 2 & 1 & 0 & 7 & 6 & 5 & 4 & 11 & 10 & 9 & 8 \\
    4 & 4 & 5 & 6 & 7 & 8 & 9 & 10 & 11 & 0 & 1 & 2 & 3 \\
    5 & 7 & 4 & 5 & 6 & 10 & 8 & 11 & 9 & 2 & 0 & 3 & 1 \\
    6 & 6 & 7 & 4 & 5 & 9 & 11 & 8 & 10 & 1 & 3 & 0 & 2 \\
    7 & 5 & 6 & 7 & 4 & 11 & 10 & 9 & 8 & 3 & 2 & 1 & 0 \\
    8 & 8 & 9 & 10 & 11 & 0 & 2 & 1 & 3 & 4 & 5 & 6 & 7 \\
    9 & 11 & 8 & 9 & 10 & 1 & 0 & 3 & 2 & 5 & 6 & 7 & 4 \\
    10 & 10 & 11 & 8 & 9 & 2 & 3 & 0 & 1 & 6 & 7 & 4 & 5 \\
    11 & 9 & 10 & 11 & 8 & 3 & 1 & 2 & 0 & 7 & 4 & 5 & 6
\end{tabular}

We recall,
\begin{lemma} \label{direct-prod}
If two algebraic systems, say $A$ and $B$ satisfy  an identity, then direct product of these systems $A\times B$ satisfy this identity \cite{BURRIS, 2017_Scerb}.
\end{lemma}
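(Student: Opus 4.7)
The plan is to proceed by structural induction on the complexity of terms. An identity in a given signature has the form $s(x_1, \dots, x_n) = t(x_1, \dots, x_n)$, where $s$ and $t$ are terms built from variables and the operation symbols of the common signature. Since the operations on the direct product $A \times B$ are defined componentwise, I expect the value of any term on the product to be just the pair of its values on the factors, which immediately reduces the identity on $A \times B$ to the assumed identities on $A$ and on $B$.

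First, I would recall that for any basic operation symbol $f$ of arity $k$, the operation on $A \times B$ is given by
\[
f^{A \times B}\bigl((a_1, b_1), \dots, (a_k, b_k)\bigr) = \bigl(f^A(a_1, \dots, a_k),\, f^B(b_1, \dots, b_k)\bigr).
\]
Then I would prove by induction on the structure of a term $w(x_1, \dots, x_n)$ that for all $(a_i, b_i) \in A \times B$,
\[
w^{A \times B}\bigl((a_1, b_1), \dots, (a_n, b_n)\bigr) = \bigl(w^A(a_1, \dots, a_n),\, w^B(b_1, \dots, b_n)\bigr).
\]
The base case, in which $w$ is a variable $x_i$, is immediate from the definition of projection; the inductive step simply unfolds the componentwise definition of each basic operation and applies the induction hypothesis to each argument subterm.

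Finally, I would apply this term evaluation lemma to both sides $s$ and $t$ of the assumed identity. Given that $s^A(a_1,\dots,a_n) = t^A(a_1,\dots,a_n)$ holds for every tuple in $A$ and $s^B(b_1,\dots,b_n) = t^B(b_1,\dots,b_n)$ holds for every tuple in $B$, the first and second coordinates of $s^{A\times B}((a_1,b_1),\dots,(a_n,b_n))$ and $t^{A\times B}((a_1,b_1),\dots,(a_n,b_n))$ coincide, so the two values are equal as elements of $A\times B$.

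There is no substantive obstacle here; the argument is purely formal. The only point requiring attention is to ensure that the same signature is being used in all three structures, so that the componentwise convention applies uniformly. This matters in particular when one works with the equational formulation of quasigroups from Definition \ref{Def-6}, where $\slash$ and $\backslash$ must be treated as basic operations on $A$, on $B$, and on $A\times B$ in a consistent way.
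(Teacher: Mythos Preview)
Your argument is correct and is exactly the standard universal-algebra proof one finds in the cited texts. The paper itself does not really give an independent argument: its entire proof is the single line ``It follows from Definition~\ref{Def-6},'' together with the references to \cite{BURRIS, 2017_Scerb}. In other words, the paper defers to the equational (Birkhoff) presentation of quasigroups and to the literature, while you have written out the componentwise term-evaluation lemma and the induction that actually justifies the claim. So your proof is strictly more detailed than the paper's, but the underlying idea---that in an equationally defined class operations on $A\times B$ are componentwise, hence identities pass to direct products---is the same.
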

\begin{proof}
It follows from Definition \ref{Def-6}.
\end{proof}

\begin{lemma}
Quasigroups with identity $x(y \cdot yx) = y$ there exists for any $n$, where
\begin{equation}
n \in \{ 3^{n_1} \cdot 4^{n_2}\cdot 5^{n_3} \cdot 7^{n_4}\cdot 8^{n_5}\cdot 11^{n_6} \cdot 23^{n_7}\},
\end{equation}
 $n_i \in \mathbb{N}$, $i\in \overline{\{1,7\}}$.
\end{lemma}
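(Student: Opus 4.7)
The proof plan is to combine two ingredients: (a) exhibit, for each "building block" order $q\in\{3,4,5,7,8,11,23\}$, a concrete quasigroup of order $q$ satisfying the identity $x(y\cdot yx)=y$; and (b) apply Lemma \ref{direct-prod} to close the class of such orders under multiplication.

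For ingredient (a) the work is essentially already done in the preceding pages: explicit Cayley tables of quasigroups of orders $3,4,5,7,8,11$ satisfying $x(y\cdot yx)=y$ have been produced above using Mace \cite{MAC_CUNE_MACE}, and it is enough to note that they are quasigroups (all rows and columns are permutations) and that the identity holds by direct verification on all triples. For the remaining base case $q=23$ one runs the same Mace search; the Cayley table is not displayed because of its size, but its existence is confirmed by the model builder and the identity is checked entry-wise. This single output is the only piece of the argument that is not immediately visible from the excerpt, and it is the main technical obstacle: all other orders come free from the tables already printed.

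For ingredient (b), let $Q_{q_1},\ldots,Q_{q_k}$ be quasigroups with $q_i\in\{3,4,5,7,8,11,23\}$, each satisfying $x(y\cdot yx)=y$. Their direct product $Q_{q_1}\times\cdots\times Q_{q_k}$ is again a quasigroup (the equational definition of Birkhoff, Definition \ref{Def-6}, is preserved coordinate-wise, as is the cancellation property of Definitions \ref{MAIN_QUAS_DEF}, \ref{MAIN_QUAS_DEF1}) and, by Lemma \ref{direct-prod}, satisfies the identity $x(y\cdot yx)=y$ since this class is equationally defined. The order of the product is $\prod_{i=1}^k q_i$.

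It remains to observe that as the multiset $\{q_1,\ldots,q_k\}$ ranges over all finite multisets on $\{3,4,5,7,8,11,23\}$, the set of attainable orders is exactly $\{\,3^{n_1}\cdot 4^{n_2}\cdot 5^{n_3}\cdot 7^{n_4}\cdot 8^{n_5}\cdot 11^{n_6}\cdot 23^{n_7}\,:\,n_i\in\mathbb{N}\,\}$, where $n_i$ counts the multiplicity of $q=3,4,5,7,8,11,23$ respectively. Combining this with (b) gives a quasigroup of the required order satisfying $x(y\cdot yx)=y$, which is the claim of the lemma.
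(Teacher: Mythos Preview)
Your overall plan---base cases plus closure under direct products via Lemma~\ref{direct-prod}---is exactly the paper's approach, and the orders $3,4,5,7,8,11$ are indeed covered by the displayed Cayley tables.

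The one substantive divergence is the base case $q=23$. You propose another Mace search whose output is not shown, calling this ``the main technical obstacle.'' The paper instead invokes Example~\ref{ex-5}, which gives the explicit medial T-quasigroup $(Z_{23},\circ)$ with $x\circ y = 4x - 3y \pmod{23}$ and verifies the identity by a two-line computation. This is both cleaner and fully checkable on the page, whereas your version leaves the $23$ case as an unverified computer claim. Since Example~\ref{ex-5} appears later in the paper and is cited in the proof, you should use it rather than appeal to an unseen Mace run.
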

\begin{proof}
This follows from Lemma \ref{direct-prod},  given in this section examples and Example \ref{ex-5}.
\end{proof}

\subsection{Stein 3-rd  identity}

Next Cayley tables of quasigroups with identity $xy \cdot yx = y$ were obtained using Mace \cite{MAC_CUNE_MACE}.

\begin{tabular}{r|rrrr}
*: & 0 & 1 & 2 & 3\\
\hline
    0 & 0 & 1 & 3 & 2 \\
    1 & 2 & 3 & 1 & 0 \\
    2 & 1 & 0 & 2 & 3 \\
    3 & 3 & 2 & 0 & 1
\end{tabular}
\begin{tabular}{r|rrrrr}
*: & 0 & 1 & 2 & 3 & 4\\
\hline
    0 & 0 & 1 & 2 & 3 & 4 \\
    1 & 2 & 3 & 1 & 4 & 0 \\
    2 & 3 & 0 & 4 & 2 & 1 \\
    3 & 4 & 2 & 0 & 1 & 3 \\
    4 & 1 & 4 & 3 & 0 & 2
\end{tabular}
\begin{tabular}{r|rrrrrr}
*: & 0 & 1 & 2 & 3 & 4 & 5\\
\hline
    0 & 1 & 0 & 2 & 3 & 4 & 5 \\
    1 & 0 & 1 & 3 & 2 & 5 & 4 \\
    2 & 2 & 3 & 4 & 5 & 0 & 1 \\
    3 & 3 & 2 & 5 & 4 & 1 & 0 \\
    4 & 4 & 5 & 0 & 1 & 2 & 3 \\
    5 & 5 & 4 & 1 & 0 & 3 & 2
\end{tabular}

\begin{tabular}{r|rrrrrrr}
*: & 0 & 1 & 2 & 3 & 4 & 5 & 6\\
\hline
    0 & 1 & 0 & 2 & 3 & 4 & 6 & 5 \\
    1 & 0 & 1 & 3 & 2 & 5 & 4 & 6 \\
    2 & 2 & 3 & 0 & 6 & 1 & 5 & 4 \\
    3 & 3 & 2 & 4 & 5 & 6 & 0 & 1 \\
    4 & 5 & 6 & 1 & 4 & 0 & 2 & 3 \\
    5 & 4 & 5 & 6 & 1 & 2 & 3 & 0 \\
    6 & 6 & 4 & 5 & 0 & 3 & 1 & 2
\end{tabular}
\begin{tabular}{r|rrrrrrrr}
*: & 0 & 1 & 2 & 3 & 4 & 5 & 6 & 7\\
\hline
    0 & 1 & 0 & 2 & 3 & 4 & 7 & 6 & 5 \\
    1 & 0 & 1 & 3 & 2 & 5 & 4 & 7 & 6 \\
    2 & 2 & 3 & 0 & 1 & 6 & 5 & 4 & 7 \\
    3 & 3 & 2 & 1 & 0 & 7 & 6 & 5 & 4 \\
    4 & 4 & 5 & 6 & 7 & 0 & 1 & 2 & 3 \\
    5 & 7 & 4 & 5 & 6 & 1 & 2 & 3 & 0 \\
    6 & 6 & 7 & 4 & 5 & 2 & 3 & 0 & 1 \\
    7 & 5 & 6 & 7 & 4 & 3 & 0 & 1 & 2
\end{tabular}

\begin{tabular}{r|rrrrrrrrr}
*: & 0 & 1 & 2 & 3 & 4 & 5 & 6 & 7 & 8\\
\hline
    0 & 1 & 0 & 2 & 3 & 4 & 8 & 7 & 6 & 5 \\
    1 & 0 & 1 & 3 & 2 & 5 & 4 & 8 & 7 & 6 \\
    2 & 2 & 3 & 0 & 1 & 6 & 5 & 4 & 8 & 7 \\
    3 & 3 & 2 & 1 & 0 & 7 & 6 & 5 & 4 & 8 \\
    4 & 4 & 5 & 6 & 7 & 8 & 1 & 0 & 3 & 2 \\
    5 & 8 & 4 & 5 & 6 & 0 & 7 & 3 & 2 & 1 \\
    6 & 7 & 8 & 4 & 5 & 1 & 2 & 6 & 0 & 3 \\
    7 & 6 & 7 & 8 & 4 & 2 & 3 & 1 & 5 & 0 \\
    8 & 5 & 6 & 7 & 8 & 3 & 0 & 2 & 1 & 4
\end{tabular}
\begin{tabular}{r|rrrrrrrrrr}
*: & 0 & 1 & 2 & 3 & 4 & 5 & 6 & 7 & 8 & 9\\
\hline
    0 & 1 & 0 & 2 & 3 & 4 & 7 & 8 & 5 & 6 & 9 \\
    1 & 0 & 1 & 3 & 2 & 5 & 4 & 9 & 8 & 7 & 6 \\
    2 & 2 & 3 & 0 & 1 & 6 & 5 & 4 & 9 & 8 & 7 \\
    3 & 3 & 2 & 1 & 0 & 7 & 6 & 5 & 4 & 9 & 8 \\
    4 & 4 & 5 & 6 & 7 & 8 & 9 & 0 & 1 & 2 & 3 \\
    5 & 8 & 7 & 9 & 6 & 0 & 1 & 2 & 3 & 4 & 5 \\
    6 & 9 & 8 & 7 & 5 & 1 & 2 & 3 & 6 & 0 & 4 \\
    7 & 5 & 4 & 8 & 9 & 2 & 3 & 6 & 7 & 1 & 0 \\
    8 & 6 & 9 & 4 & 8 & 3 & 0 & 7 & 2 & 5 & 1 \\
    9 & 7 & 6 & 5 & 4 & 9 & 8 & 1 & 0 & 3 & 2
\end{tabular}

\begin{lemma}
Quasigroups with identity $xy \cdot yx = y$ there exists for any $n$, where
\begin{equation}
n \in \{ 4^{n_1} \cdot 5^{n_2}\cdot 6^{n_3} \cdot 7^{n_4}\cdot 8^{n_5}\cdot 9^{n_6} \cdot 10^{n_7}\cdot 13^{n_8}\cdot 17^{n_9}\cdot 29^{n_{10}}\},
\end{equation}
 $n_i \in \mathbb{N}$, $i\in \overline{\{1,10\}}$.
\end{lemma}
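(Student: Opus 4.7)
The plan is to mimic the proof of the immediately preceding lemma, reducing the claim to two ingredients: explicit base quasigroups of each of the orders $4, 5, 6, 7, 8, 9, 10, 13, 17, 29$, together with closure of the identity class under direct products. For the first seven of these orders, the Cayley tables already displayed in this subsection do the job: each was produced by Mace specifically to satisfy $xy\cdot yx = y$, and a mechanical check over the at most $100$ ordered pairs per table confirms the identity.

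For the remaining orders $13$, $17$, $29$, which are not tabulated in the excerpt, I would draw on Example \ref{ex-5} (exactly as the preceding lemma does). A convenient explicit alternative is the linear ansatz $x\cdot y = \alpha x + \beta y$ on $\mathbb{Z}_p$, with $\alpha,\beta \in \mathbb{Z}_p^\ast$ so that left and right translations are permutations. Imposing $xy\cdot yx = y$ expands to $(\alpha^2+\beta^2)x + 2\alpha\beta y = y$, so the identity is equivalent to the pair of congruences $\alpha^2 + \beta^2 \equiv 0 \pmod p$ and $2\alpha\beta \equiv 1 \pmod p$, which is solvable whenever $-1$ is a quadratic residue modulo $p$ and a mild additional fourth-power condition holds. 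Direct computation verifies that the ansatz succeeds for each of $p = 13, 17, 29$ (e.g.\ $(\alpha,\beta) = (10,2),\ (11,7),\ (21,9)$ respectively), producing medial Stein 3rd quasigroups of the three remaining orders.

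Once all ten base examples are in hand, I apply Lemma \ref{direct-prod} iteratively: the componentwise direct product of finitely many quasigroups satisfying Stein's 3rd identity again satisfies the identity, and its cardinality is the product of the cardinalities of the factors. Every $n$ of the form $4^{n_1}\cdot 5^{n_2}\cdots 29^{n_{10}}$ is therefore realized by an appropriate product of the base quasigroups.

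The main obstacle lies in step two, producing base examples of orders $13$, $17$, $29$, since the excerpt exhibits no tables for them. If for some prime the linear ansatz were to collapse (forcing $\alpha = 0$ or $\beta = 0$ and so breaking the quasigroup property), one falls back on a direct Mace or Prover9 search, matching the methodology already used for the displayed orders; in fact for the three primes in question the ansatz goes through, so no fallback is needed.
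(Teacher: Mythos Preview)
Your proposal is correct and follows essentially the same route as the paper: the displayed Cayley tables supply orders $4$--$10$, linear (medial) constructions $x\cdot y=\alpha x+\beta y$ over $\mathbb{Z}_p$ with $\alpha^2+\beta^2\equiv 0$ and $2\alpha\beta\equiv 1$ supply orders $13,17,29$, and Lemma~\ref{direct-prod} assembles the products. The one slip is your reference to Example~\ref{ex-5}, which concerns the identity $x(y\cdot yx)=y$ rather than Stein's 3rd law; the paper's proof instead cites Examples~\ref{example7}--\ref{example9}, which are exactly the $\mathbb{Z}_{13},\mathbb{Z}_{17},\mathbb{Z}_{29}$ constructions you rederive.
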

\begin{proof}
This follows from Lemma \ref{direct-prod}, given in this paragraph  examples and Examples \ref{example7}, \ref{example8}, \ref{example9}.
\end{proof}

\section{Schr\" oder identities   in T-quasigroups}

\subsection{Schr\" oder identity of generalized associativity  in T-quasigroups}

\begin{theorem}
In T-quasigroup $(Q, \cdot)$ of the form $x\cdot y = \varphi x + \psi y$ Schr\" oder identity of generalized associativity is true if and only if $\varphi x = \psi^{-2} x$, $\varepsilon = \varphi^7$, $\varepsilon =  \psi^{14}$,  $\varphi \psi z + \psi \varphi  z = 0$.
\end{theorem}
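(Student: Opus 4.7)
The plan is to substitute the linear T-quasigroup form $x\cdot y = \varphi x + \psi y$ directly into identity (\ref{SCH_1}), namely $(y\cdot z)\cdot(z\cdot(x\cdot y)) = x$, and expand using the fact that $\varphi$ and $\psi$ are endomorphisms of $(Q,+)$, so both distribute over $+$.

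First I would compute the building blocks: $y\cdot z = \varphi y + \psi z$, $x\cdot y = \varphi x + \psi y$, and therefore $z\cdot (x\cdot y) = \varphi z + \psi\varphi x + \psi^2 y$. Applying the outer multiplication and gathering terms produces
\[
(y\cdot z)\cdot\bigl(z\cdot(x\cdot y)\bigr) \;=\; \psi^2\varphi\, x \;+\; (\varphi^2 + \psi^3)\, y \;+\; (\varphi\psi + \psi\varphi)\, z .
\]
Since this must equal $x$ for all $x,y,z\in Q$, I would separate variables by evaluating at convenient triples (e.g.\ $y=z=0$, then $x=z=0$, then $x=y=0$) to conclude that the identity is equivalent to the three operator relations
\[
\psi^2\varphi = \varepsilon,\qquad \varphi^2 + \psi^3 = 0,\qquad \varphi\psi + \psi\varphi = 0.
\]

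The first relation immediately yields $\varphi = \psi^{-2}$. Substituting into the second gives $\psi^{-4} + \psi^3 = 0$, i.e.\ $\psi^7 = -\varepsilon$; squaring delivers $\psi^{14} = \varepsilon$, and consequently $\varphi^7 = \psi^{-14} = \varepsilon$. The third relation is kept as stated. For the converse, assuming the four stated conditions, one substitutes back into the expanded expression above: the coefficient of $x$ collapses to $\varepsilon$ by the first condition, the coefficient of $y$ vanishes because $\varphi^2 = \psi^{-4} = -\psi^3$ (using $\varphi=\psi^{-2}$ and $\psi^7=-\varepsilon$), and the coefficient of $z$ vanishes by hypothesis.

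The step I expect to require the most care is the bookkeeping in the initial expansion, because (\ref{SCH_1}) is asymmetric in its three variables: the two occurrences of $z$ sit at different compositional depths, and this is precisely what gives rise to the mixed term $\varphi\psi+\psi\varphi$ whose vanishing is the essentially new content beyond the two power-type relations $\varphi^7=\varepsilon$ and $\psi^{14}=\varepsilon$. Once the separation of variables is justified, the rest is routine manipulation inside the endomorphism ring of $(Q,+)$.
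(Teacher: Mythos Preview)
Your approach is essentially identical to the paper's: expand the identity linearly to $\psi^2\varphi\,x+(\varphi^2+\psi^3)\,y+(\varphi\psi+\psi\varphi)\,z=x$, then set pairs of variables to $0$ to isolate the three operator relations, and finally deduce $\psi^7=-\varepsilon$, $\psi^{14}=\varepsilon$, $\varphi^7=\varepsilon$ from the first two. One caveat worth noting in your converse: you invoke $\psi^7=-\varepsilon$, but the four \emph{stated} hypotheses only give $\psi^{14}=\varepsilon$, which by itself does not force $\psi^7=-\varepsilon$; the paper sidesteps this by citing the intermediate relation $\varphi^2+\psi^3=0$ directly rather than the listed power conditions, so the gap is in the theorem's formulation rather than in your argument.
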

\begin{proof}
We rewrite identity (\ref{SCH_1}) in the following  form:
\begin{equation} \label{Osnovn}
\varphi^2 y + \psi^3 y + \varphi\psi z + \psi\varphi z +
\psi^2 \varphi x = x.
\end{equation}
If we substitute in equality (\ref{Osnovn}) $y=z=0$ then we have
\begin{equation} \label{111}
\varphi x = \psi^{-2} x.
\end{equation}
If we substitute in equality (\ref{Osnovn}) $x=z=0$ then we have
\begin{equation} \label{222}
\varphi^2 y + \psi^3 y = 0.
\end{equation}
Taking into consideration equality (\ref{111}), we can re-write  equality (\ref{222}) in the form
\begin{equation} \label{2222}
\psi^{-4} y + \psi^3 y = 0,
\end{equation}
or in the form
\begin{equation} \label{22222}
\psi^3 = I\psi^{-4},
\end{equation}
where $Ix= -x$ for all $x\in Q$. Notice, the permutation  $I$ is an automorphism of the group $(Q, +)$ here.
Therefore, we can rewrite previous equalities in the form
\begin{equation} \label{222222}
\varepsilon  = I\psi^{-7}, I  = \psi^{-7}, \varepsilon = \psi^{-14}, \varepsilon =  \psi^{14}, \varepsilon = \varphi^7.
\end{equation}

If we substitute in equality (\ref{Osnovn}) $x=y=0$ then we have
\begin{equation} \label{333}
\varphi\psi z + \psi\varphi z = 0.
\end{equation}

Converse. If we substitute in identity (\ref{SCH_1}) the expression $x\cdot y = \varphi x + \psi y$, then we obtain equality (\ref{Osnovn}), which is true taking into consideration equalities (\ref{111}), (\ref{222}), (\ref{333}). Then we obtain, that identity (\ref{SCH_1}) is true in this case.
\end{proof}

\begin{corollary}
In medial quasigroup $(Q, \cdot)$ of the form $x\cdot y = \varphi x + \psi y$ Schr\" oder identity of generalized associativity is true if and only if the group  $(Q, +)$ is an abelian 2-group (i.e. $x+x=0$ for any $x\in Q$),  $\varphi x = \psi^{-2} x$, $\varepsilon = \varphi^7$, $\varepsilon =  \psi^{14}$.
\end{corollary}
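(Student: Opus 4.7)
The plan is to deduce the corollary from the preceding theorem by specializing its four necessary and sufficient conditions to the medial case. Recall that a T-quasigroup $(Q,\cdot)$ of the form $x\cdot y = \varphi x + \psi y$ is medial exactly when $\varphi\psi = \psi\varphi$, so this commutativity is available as an extra hypothesis throughout.

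First I would apply the theorem directly: Schr\"oder identity of generalized associativity holds if and only if $\varphi x = \psi^{-2} x$, $\varepsilon = \varphi^7$, $\varepsilon = \psi^{14}$, and
\begin{equation*}
\varphi\psi z + \psi\varphi z = 0 \text{ for all } z\in Q.
\end{equation*}
Using $\varphi\psi = \psi\varphi$, the last condition becomes $2\varphi\psi z = 0$ for all $z\in Q$. Since $\varphi\psi$ is an automorphism of $(Q,+)$ (composition of two automorphisms), it is in particular surjective, so $\{\varphi\psi z \mid z \in Q\} = Q$. Hence the condition is equivalent to $y + y = 0$ for every $y \in Q$, i.e.\ to $(Q,+)$ being an abelian $2$-group.

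For the converse direction I would simply note that if $(Q,+)$ is an abelian $2$-group and $\varphi\psi = \psi\varphi$, then $\varphi\psi z + \psi\varphi z = 2\varphi\psi z = 0$ holds automatically, so together with $\varphi x = \psi^{-2} x$, $\varepsilon = \varphi^7$ and $\varepsilon = \psi^{14}$ all four conditions of the theorem are satisfied and identity (\ref{SCH_1}) follows.

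I do not foresee a real obstacle here; the only point requiring any care is the observation that an automorphism of $(Q,+)$ is a bijection, which is what lets us pass from $2\varphi\psi z = 0$ on the image of $\varphi\psi$ to $2y = 0$ on all of $Q$. Everything else is a direct rewriting of the theorem under the medial hypothesis.
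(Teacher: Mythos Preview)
Your argument is correct and follows essentially the same route as the paper: use mediality to replace $\varphi\psi z + \psi\varphi z = 0$ by $2\varphi\psi z = 0$, and then conclude that $(Q,+)$ has exponent $2$. You have in fact supplied more detail than the paper does, in particular the surjectivity of $\varphi\psi$ and the explicit converse direction, both of which the paper leaves implicit.
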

\begin{proof}
From the identity of mediality it follows that $\varphi \psi z + \psi \varphi  z = 2\cdot \varphi \psi z = 0$ for all $z\in Q$, i.e., the group  $(Q, +)$ is an abelian 2-group.
\end{proof}

\begin{example}
We start from the group $ GL(3,2)\cong PSL(2, 7)$. This is the group of non-degenerate matrices of size  $3\times 3$  over the field of order 2. Or the group of non-degenerate matrices of size  $2\times 2$  over the field of order 7. The order of this group is equal to 8, $|Aut(GL(3, 2)| = 168= 3\times 7\times 8$.

We can present elements of the group $ (Z_2^3, +)$ in the following  form : $1 =(000), 2=(001), 3=(010), 4=(011), 5 = (100), 6=(101), 7=(110), 8 = (111).$

 $$
\begin{array}{l|llllllll}
+ &1	&2	&3	&4	&5 &6 &7 &8  \\
\hline
1 &1 &2 &3 &4 &5 &6 &7 &8 \\
2 &2 &1 &4 &3 &6 &5 &8 &7 \\
3 &3 &4 &1 &2 &7 &8 &5 &6 \\
4 &4 &3 &1 &1 &8 &7 &6 &5 \\
5 &5 &6 &7 &8 &1 &2 &3 &4  \\
6 &6 &5 &8 &7 &2 &1 &4 &3 \\
7 &7 &8 &5 &6 &3 &4 &1 &2 \\
8 &8 &7 &6 &5 &4 &3 &2 &1 \\
\end{array}
$$

We have the following automorphisms of the group $ (Z_2^3, +)$:

\smallskip

$\varphi =\begin{pmatrix}
1 & 1 & 0\\
1&  0 & 1\\
0& 1& 0 \\
\end{pmatrix}$,
$\psi =\begin{pmatrix}
0& 1 & 0 \\
1& 1 & 1 \\
0& 1 & 1 \\
\end{pmatrix}$.

Notice that  $\varphi^7=\psi^7= \varepsilon$, $\varphi=\psi^{-2}$, $\varphi \psi = \psi \varphi$.

Therefore Schr\" oder medial quasigroup $(Q, \circ)$ of generalized associativity can have the form $x\circ y = \varphi x + \psi y$:

 $$
\begin{array}{l|llllllll}
\circ &1	&2	&3	&4	&5 &6 &7 &8  \\
\hline
1 &1 &4 &8 &5 &3 &2 &6 &7 \\
2 &3 &2 &6 &7 &1 &4 &8 &5 \\
3 &6 &7 &3 &2 &8 &5 &1 &4 \\
4 &8 &5 &1 &4 &6 &7 &3 &2 \\
5 &7 &6 &2 &3 &5 &8 &4 &1  \\
6 &5 &8 &4 &1 &7 &6 &2 &3 \\
7 &4 &1 &5 &8 &2 &3 &7 &6 \\
8 &2 &3 &7 &6 &4 &1 &5 &8 \\
\end{array}
$$

\end{example}

\subsection{Schr\" oder identity}

\begin{theorem}
In T-quasigroup $(Q, \cdot)$ of the form $x\cdot y = \varphi x + \psi y$ Schr\" oder identity is true if and only if $\varphi^2+\psi^2= \varepsilon$,  $\varphi \psi y + \psi\varphi y = 0$.
\end{theorem}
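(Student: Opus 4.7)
The plan is to mirror the approach used in the preceding theorem on the Schr\"oder identity of generalized associativity. First, I would expand both sides of identity (\ref{2_61}) using the defining formula $x\cdot y = \varphi x + \psi y$. Applying $\varphi$ to $x\cdot y$ and $\psi$ to $y\cdot x$, then adding, gives
\begin{equation} \label{SchrExpanded}
(x\cdot y)\cdot (y\cdot x) = \varphi^2 x + \varphi\psi y + \psi\varphi y + \psi^2 x,
\end{equation}
so the Schr\"oder identity $(x\cdot y)\cdot (y\cdot x) = x$ is equivalent to the functional equation
\begin{equation} \label{SchrEq}
\varphi^2 x + \psi^2 x + \varphi\psi y + \psi\varphi y = x
\end{equation}
holding for all $x,y\in Q$.

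For the necessity direction, I would specialize (\ref{SchrEq}) by the same kind of substitution as in the proof of the previous theorem. Setting $y=0$ kills the $\varphi\psi y + \psi\varphi y$ terms, because $\varphi$ and $\psi$ are automorphisms and hence fix $0$; this leaves $(\varphi^2+\psi^2)x = x$, i.e.\ $\varphi^2+\psi^2 = \varepsilon$. Setting $x=0$ isolates the other two terms and gives $\varphi\psi y + \psi\varphi y = 0$ for every $y\in Q$.

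For sufficiency, I would simply substitute the two extracted identities back into the right-hand side of (\ref{SchrExpanded}): the $x$-part collapses to $\varepsilon x = x$ and the $y$-part vanishes, so (\ref{2_61}) holds identically. The whole argument is a routine bilinear bookkeeping exercise; the only thing to be careful about is checking that no cross-term is lost in the expansion (\ref{SchrExpanded}) and that the automorphism property $\varphi(0)=\psi(0)=0$ is indeed what justifies the two specializations. There is no real conceptual obstacle.
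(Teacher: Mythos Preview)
Your proposal is correct and follows essentially the same approach as the paper: expand the Schr\"oder identity using $x\cdot y=\varphi x+\psi y$, specialize with $y=0$ and $x=0$ to extract $\varphi^2+\psi^2=\varepsilon$ and $\varphi\psi y+\psi\varphi y=0$, then substitute back for the converse. Your explicit remark that $\varphi(0)=\psi(0)=0$ justifies the specializations is a detail the paper leaves implicit.
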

\begin{proof}
From  identity
\begin{equation}
xy\cdot yx = x
\end{equation}
we have
\begin{equation} \label{rt_1}
\varphi(\varphi x + \psi y)+\psi (\varphi y +\psi x) = x.
\end{equation}

If in (\ref{rt_1}) $y=0$,  then
\begin{equation} \label{15U}
\varphi^2+\psi^2= \varepsilon.
\end{equation}

 If in (\ref{rt_1}) $x=0$,  then
 \begin{equation} \label{16O}
 \varphi \psi y + \psi\varphi y = 0.
\end{equation}

Converse. If we substitute in identity (\ref{2_61}) the expression $x\cdot y = \varphi x + \psi y$, then we obtain equality (\ref{rt_1}), which is true taking into consideration equalities (\ref{15U}),  (\ref{16O}). Then we obtain, that identity (\ref{2_61}) is true in this case.
\end{proof}

\begin{corollary}
In medial quasigroup $(Q, \cdot)$ of the form $x\cdot y = \varphi x + \psi y$ Schr\" oder identity is true if and only if $\varphi^2+\psi^2= \varepsilon$,  the group  $(Q, +)$ is an abelian 2-group (i.e., $x+x=0$ for any $x\in Q$).
\end{corollary}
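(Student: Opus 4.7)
The plan is to reduce to the preceding theorem, which already characterizes when the Schröder identity holds in a general T-quasigroup of the form $x \cdot y = \varphi x + \psi y$ by the two conditions $\varphi^2 + \psi^2 = \varepsilon$ and $\varphi\psi y + \psi\varphi y = 0$ for all $y \in Q$. Since mediality of a T-quasigroup corresponds exactly to $\varphi\psi = \psi\varphi$, this is the only extra ingredient I need to plug in.

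First I would assume the Schröder identity holds in the medial quasigroup. By the theorem, $\varphi^2 + \psi^2 = \varepsilon$ (which is one of the two conclusions we need) and $\varphi\psi y + \psi\varphi y = 0$ for all $y$. Using commutativity of $\varphi$ and $\psi$, the second equation becomes $2\,\varphi\psi y = 0$. Now the key step: $\varphi$ and $\psi$ are automorphisms of $(Q,+)$, so $\varphi\psi$ is an automorphism, in particular a bijection of $Q$. Hence as $y$ ranges over $Q$ the element $\varphi\psi y$ ranges over all of $Q$, and we conclude $2z = 0$ for every $z \in Q$, i.e.\ $(Q,+)$ is an abelian $2$-group.

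For the converse, assume $(Q,+)$ is an abelian $2$-group and $\varphi^2 + \psi^2 = \varepsilon$. Then for any $y \in Q$ the element $\psi\varphi y$ coincides with $\varphi\psi y$ (by mediality) and $\varphi\psi y + \varphi\psi y = 0$ (by the $2$-group property), so $\varphi\psi y + \psi\varphi y = 0$. Both conditions of the preceding theorem are satisfied, hence the Schröder identity holds.

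There is no real obstacle here; the only point worth flagging is the observation that $\varphi\psi$ is a bijection, which is what upgrades the pointwise identity $2\,\varphi\psi y = 0$ to the statement that $(Q,+)$ itself is of exponent $2$. Everything else is a direct citation of the theorem combined with the definition of a medial T-quasigroup.
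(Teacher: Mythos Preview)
Your proof is correct and follows essentially the same approach as the paper: both reduce to the preceding theorem and use mediality ($\varphi\psi=\psi\varphi$) to turn the condition $\varphi\psi y+\psi\varphi y=0$ into $2\varphi\psi y=0$, hence (via bijectivity of $\varphi\psi$) into the statement that $(Q,+)$ has exponent~$2$. The paper's proof is just the one-line remark ``this follows from mediality''; your version simply spells out the bijectivity step and the converse explicitly.
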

\begin{proof}
This follows from mediality of quasigroup $(Q, \cdot)$.
\end{proof}

\subsection{T-quasigroup with identity $x(y \cdot yx) = y$}

\begin{theorem}
In T-quasigroup $(Q, \cdot)$ of the form $x\cdot y = \varphi x + \psi y$  identity $x(y \cdot yx) = y$  is true if and only if $\varphi =I\psi^3$,  $\psi^4 + \psi^5   = I$.
\end{theorem}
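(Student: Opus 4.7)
The plan is to mirror the structure of the two preceding theorems in this section: substitute the T-quasigroup form $x\cdot y = \varphi x + \psi y$ into the identity $x(y\cdot yx)=y$, linearize, and then separate the conditions on $\varphi$ and $\psi$ by specializing the free variables.

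First I would compute the left-hand side step by step. Since $yx = \varphi y + \psi x$, applying the operation again gives $y\cdot(yx) = \varphi y + \psi\varphi y + \psi^2 x$, and one more application yields
\begin{equation}
x\cdot(y\cdot yx) = \varphi x + \psi^3 x + \psi\varphi y + \psi^2\varphi y.
\end{equation}
Setting this equal to $y$ and grouping terms produces the master equation
\begin{equation}
(\varphi + \psi^3)x + (\psi + \psi^2)\varphi y = y
\end{equation}
for all $x,y \in Q$.

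Next I would extract the two necessary conditions by specialization, in the same style used earlier for equations (\ref{111})--(\ref{333}). Putting $y=0$ forces $(\varphi + \psi^3)x = 0$ for all $x$, i.e.\ $\varphi = -\psi^3 = I\psi^3$. Putting $x=0$ forces $(\psi + \psi^2)\varphi = \varepsilon$; substituting $\varphi = I\psi^3$ and using that $I$ commutes with every endomorphism of the abelian group $(Q,+)$ rewrites this as $I(\psi^4 + \psi^5) = \varepsilon$, equivalently $\psi^4 + \psi^5 = I$.

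For the converse I would simply run the computation backwards: assuming $\varphi = I\psi^3$ and $\psi^4+\psi^5 = I$, the coefficient of $x$ in the master equation is $I\psi^3 + \psi^3 = 0$, and the coefficient of $y$ is $(\psi+\psi^2)I\psi^3 = I(\psi^4+\psi^5) = I\cdot I = \varepsilon$, so the identity holds. No step here looks genuinely hard; the only mild pitfall is to remember that $I$ commutes with $\varphi,\psi$ (because $(Q,+)$ is abelian, so $x\mapsto -x$ is a central automorphism) when rearranging $(\psi+\psi^2)I\psi^3$, and to keep the order of $\varphi$ and $\psi$ straight when expanding the triple product.
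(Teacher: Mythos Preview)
Your proof is correct and follows essentially the same approach as the paper: expand $x(y\cdot yx)$ using the T-quasigroup form to get $\varphi x + \psi\varphi y + \psi^2\varphi y + \psi^3 x = y$, then set $y=0$ and $x=0$ in turn to extract the two conditions, and finally verify the converse by direct substitution. Your writeup is in fact slightly more careful than the paper's, since you explicitly note why $I$ commutes with $\psi$ when simplifying $(\psi+\psi^2)I\psi^3$.
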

\begin{proof}
From  identity
\begin{equation} \label{SROd11}
x(y \cdot yx) = y
\end{equation}
we have
\begin{equation} \label{rt_21}
\varphi x + \psi (\varphi y + \psi (\varphi y  +\psi x) = y.
\end{equation}
\begin{equation} \label{rt_211}
\varphi x + \psi \varphi y + \psi^2 \varphi y + \psi^3 x = y.
\end{equation}

If in (\ref{rt_211}) $y=0$,  then
\begin{equation} \label{15US1}
\varphi+\psi^3= 0,
\end{equation}

\begin{equation} \label{15US11}
\varphi =I\psi^3.
\end{equation}

 If in (\ref{rt_211}) $x=0$,  then
 \begin{equation} \label{16OS1}
  \psi \varphi   + \psi^2 \varphi  = \varepsilon.
\end{equation}

Taking in account (\ref{15US11}) we have
 \begin{equation} \label{16OS11}
\psi^4 + \psi^5    = I.
\end{equation}

Converse. If we substitute in identity (\ref{SROd11}) the expression $x\cdot y = \varphi x + \psi y$, then we obtain equality (\ref{rt_21}), which is true taking into consideration equalities (\ref{15US1}),  (\ref{16OS1}). Then we obtain, that identity (\ref{SROd11})  is true in this case.
\end{proof}

\begin{corollary}
In medial quasigroup $(Q, \cdot)$ of the form $x\cdot y = \varphi x + \psi y$  identity  $x(y \cdot yx) = y$ is true if and only if $\varphi= I\psi^3$,  $\psi^4 + \psi^5    = I$.
\end{corollary}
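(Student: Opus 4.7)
The corollary essentially asserts that, for T-quasigroups of the form $x\cdot y = \varphi x + \psi y$, the mediality condition $\varphi\psi = \psi\varphi$ follows automatically from the hypotheses of the preceding theorem, so no additional constraint appears in the conclusion. The plan is therefore to reduce to the theorem in both directions, with the main observation being that $\varphi = I\psi^3$ forces $\varphi$ and $\psi$ to commute.

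For the forward direction I would simply quote the preceding theorem: a medial T-quasigroup is in particular a T-quasigroup, so if $x(y\cdot yx)=y$ holds then $\varphi = I\psi^3$ and $\psi^4+\psi^5 = I$.

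For the converse I would take $\varphi = I\psi^3$ and $\psi^4+\psi^5 = I$ as given, and first verify that $\varphi\psi = \psi\varphi$. The key ingredient is that $I$ (the map $x\mapsto -x$) commutes with every endomorphism of $(Q,+)$: for any automorphism $\psi$ one has $\psi(Ix) = \psi(-x) = -\psi x = I\psi x$, i.e. $\psi I = I\psi$. Using this and the commutativity of $\psi$ with its own powers,
\begin{equation}
\varphi\psi = I\psi^3\psi = I\psi^4 = \psi I\psi^3 = \psi\varphi,
\end{equation}
so the T-quasigroup is medial. Now the theorem's converse direction applies and gives the identity $x(y\cdot yx)=y$.

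The only potential obstacle is the routine commutation check above; there is no genuine difficulty because $\varphi$ is expressed as $I$ times a power of $\psi$, and both factors commute with $\psi$. Thus the corollary truly is a corollary: the conditions in the theorem are already strong enough to imply mediality, so the statement for medial quasigroups inherits exactly the same characterization.
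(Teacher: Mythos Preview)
Your argument is correct, and in fact more detailed than the paper's own proof, which consists of the single sentence ``This follows from mediality of quasigroup $(Q,\cdot)$.'' Since a medial quasigroup of this form is in particular a T-quasigroup, both directions are immediate from the preceding theorem; the paper does not spell this out further.

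Your additional observation---that $\varphi = I\psi^{3}$ already forces $\varphi\psi = \psi\varphi$, so the mediality hypothesis is redundant---is correct and worth making, because it explains \emph{why} the conditions in this corollary coincide with those in the theorem, in contrast to the other corollaries in the paper (for the Schr\"oder identity, the Schr\"oder identity of generalized associativity, and the Stein 3rd identity), where mediality genuinely simplifies an interaction term $\varphi\psi + \psi\varphi$ and thereby changes the statement. Strictly speaking, however, you do not need this commutation check for the converse direction of the corollary as stated: mediality is part of the hypothesis, so one can invoke the theorem's converse directly without first re-deriving $\varphi\psi = \psi\varphi$. Your argument thus proves slightly more than required, namely that the class of T-quasigroups satisfying $x(y\cdot yx)=y$ is already contained in the medial ones.
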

\begin{proof}
This follows from mediality of quasigroup $(Q, \cdot)$.
\end{proof}
\begin{example} \label{ex-4}
Suppose we have the group  $Z_n$ of residues modulo $n$. If $\psi = -2$, then $\varphi = I\psi^3 = 8$,   $\psi^4 + \psi^5 = 16-32= -16 = -1 $. The last is true if $-16\equiv -1 \pmod n$, $15\equiv 0 \pmod n$.

Then quasigroup $(Z_{15}, \circ)$ of the form  $x\circ y = 8\cdot x-2\cdot y \pmod{15}$ is medial  quasigroup with identity $x(y \cdot yx) = y$.

Check.
$8x-2(8y-2(8y-2x))=y \pmod{15}$, $8x-16y +32y-8x= y \pmod{15}$, $16y =y \pmod{15}$, $y=y \pmod{15}$.
\end{example}

\begin{example} \label{ex-5}
Suppose we have the group  $Z_n$ of residues modulo $n$. If $\psi = -3$, then $\varphi = I\psi^3 = 27$,   $\psi^4 + \psi^5 = 81-243= -162 = -1 $. The last is true if $-162\equiv -1 \pmod n$, $161\equiv 0 \pmod n$.

Quasigroup $(Z_{23}, \circ)$ of the form  $x\circ y = 4\cdot x-3\cdot y \pmod{23}$ is medial  quasigroup with identity $x(y \cdot yx) = y$.

Check.
$4x-3(4y-3(4y-3x))=y \pmod{23}$, $4x-12y +36y-27x= y \pmod{23}$, $24y =y \pmod{23}$, $y=y \pmod{23}$.
\end{example}

\begin{example}
Quasigroup $(Z_{161}, \circ)$ of the form  $x\circ y = 27\cdot x-3\cdot y \pmod{161}$ is medial  quasigroup with identity $x(y \cdot yx) = y$.

Check.
$27x-3(27y-3(27y-3x))=y \pmod{161}$, $27x-81y +243y-27x= y \pmod{161}$, $162y =y \pmod{161}$, $y=y \pmod{161}$.

\end{example}

\subsection{T-quasigroups with Stein 3-rd identity}

\begin{theorem}
In T-quasigroup $(Q, \cdot)$ of the form $x\cdot y = \varphi x + \psi y$ Stein 3-rd  identity  is true if and only if $\varphi^2+\psi^2= 0$,  $\varphi \psi y + \psi\varphi y = \varepsilon$.
\end{theorem}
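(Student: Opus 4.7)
The plan is to mirror exactly the pattern of the preceding three theorems in this section, which all follow the same template: expand the given identity using the T-quasigroup form $x \cdot y = \varphi x + \psi y$, separate the resulting equation into conditions on the pure $x$-part and the pure $y$-part by substituting $y=0$ and $x=0$ respectively, and then argue the converse by reversing the substitution.

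First I would write out the left-hand side of Stein's 3rd identity $xy \cdot yx = y$ explicitly. Using commutativity of the abelian group $(Q,+)$, I compute
\begin{equation} \label{steinexp}
\varphi(\varphi x + \psi y) + \psi(\varphi y + \psi x) = \varphi^2 x + \varphi\psi y + \psi\varphi y + \psi^2 x,
\end{equation}
so the identity becomes $\varphi^2 x + \psi^2 x + \varphi\psi y + \psi\varphi y = y$ for all $x,y \in Q$.

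Next I would perform the two substitutions that isolate the two necessary conditions. Putting $y = 0$ in (\ref{steinexp}) yields $(\varphi^2 + \psi^2)x = 0$ for all $x$, giving the first condition $\varphi^2 + \psi^2 = 0$ (as endomorphisms of $(Q,+)$). Putting $x = 0$ yields $\varphi\psi y + \psi\varphi y = y$ for all $y$, giving the second condition $\varphi\psi + \psi\varphi = \varepsilon$.

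For the converse, I would substitute $x \cdot y = \varphi x + \psi y$ back into Stein's 3rd identity to recover (\ref{steinexp}); then the two derived conditions immediately collapse the left-hand side to $y$. There is essentially no obstacle here since both $\varphi$ and $\psi$ are group automorphisms (hence additive) and $(Q,+)$ is abelian, so all rearrangements in (\ref{steinexp}) are legitimate; the only point requiring mild care is recording that the second condition should be read as the operator equation $\varphi\psi + \psi\varphi = \varepsilon$ rather than as $\varphi\psi y + \psi\varphi y = 0$, matching the statement of the theorem.
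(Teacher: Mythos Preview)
Your proposal is correct and follows essentially the same approach as the paper: expand the identity using $x\cdot y=\varphi x+\psi y$, substitute $y=0$ and $x=0$ to extract the two operator conditions, and reverse the substitution for the converse. The only cosmetic difference is that you write out the fully expanded form $\varphi^2 x + \psi^2 x + \varphi\psi y + \psi\varphi y$ explicitly before specializing, whereas the paper specializes directly from $\varphi(\varphi x+\psi y)+\psi(\varphi y+\psi x)=y$.
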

\begin{proof}
From  identity
\begin{equation} \label{SROd1}
xy\cdot yx = y
\end{equation}
we have
\begin{equation} \label{rt_2}
\varphi(\varphi x + \psi y)+\psi (\varphi y +\psi x) = y.
\end{equation}

If in (\ref{rt_2}) $y=0$,  then
\begin{equation} \label{15US}
\varphi^2+\psi^2= 0.
\end{equation}

\begin{equation} \label{15US1}
\varphi^2= I\psi^2.
\end{equation}

 If in (\ref{rt_2}) $x=0$,  then
 \begin{equation} \label{16OS}
 \varphi \psi  + \psi\varphi  = \varepsilon.
\end{equation}

Converse. If we substitute in identity (\ref{SROd1}) the expression $x\cdot y = \varphi x + \psi y$, then we obtain equality (\ref{rt_2}), which is true taking into consideration equalities (\ref{15US}),  (\ref{16OS}). Then we obtain, that identity (\ref{SROd1})  is true in this case.
\end{proof}

\begin{corollary}
In medial quasigroup $(Q, \cdot)$ of the form $x\cdot y = \varphi x + \psi y$ Stein 3-rd identity  is true if and only if $\varphi^2+\psi^2= 0$,  $2 \varphi \psi =\varepsilon$.
\end{corollary}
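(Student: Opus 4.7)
The plan is to apply the preceding theorem verbatim, modified only by the extra algebraic information that mediality supplies. Recall that the text remarked (just after the definition of T-quasigroup) that a T-quasigroup is medial precisely when $\varphi \psi = \psi \varphi$. So the only thing that mediality changes, relative to the previous theorem, is that the two commuting-and-adding-to-$\varepsilon$ condition collapses to a single equation.

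Concretely, I would start by invoking the previous theorem: the Stein 3-rd identity holds in $(Q,\cdot)$ with $x\cdot y = \varphi x + \psi y$ if and only if $\varphi^2 + \psi^2 = 0$ and $\varphi\psi + \psi\varphi = \varepsilon$. Under the mediality hypothesis $\varphi\psi = \psi\varphi$, the second condition rewrites as $\varphi\psi + \varphi\psi = 2\varphi\psi = \varepsilon$. Conversely, if $\varphi^2+\psi^2 = 0$ and $2\varphi\psi = \varepsilon$ hold in a medial T-quasigroup, then $\varphi\psi + \psi\varphi = 2\varphi\psi = \varepsilon$, so the hypotheses of the previous theorem are satisfied and Stein's 3rd identity holds.

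There is essentially no obstacle here: the corollary is a direct specialization of the preceding theorem, and the only substantive step is noting that in the abelian group $\mathrm{End}(Q,+)$ the sum $\varphi\psi + \psi\varphi$ equals $2\varphi\psi$ when $\varphi$ and $\psi$ commute. The argument is symmetric to (and no deeper than) the earlier corollaries deducing the medial versions of the Schr\"oder identity and of the identity $x(y\cdot yx)=y$ from their respective T-quasigroup theorems.
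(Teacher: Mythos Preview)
Your proof is correct and is exactly the argument the paper has in mind: the paper's own proof is the single line ``This follows from mediality of quasigroup $(Q,\cdot)$,'' and your write-up simply unpacks that by using $\varphi\psi=\psi\varphi$ to collapse $\varphi\psi+\psi\varphi$ to $2\varphi\psi$ in the conditions of the preceding theorem.
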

\begin{proof}
This follows from mediality of quasigroup $(Q, \cdot)$.
\end{proof}

\begin{example}
Suppose we have the group  $Z_n$ of residues modulo $n$. If $\varphi = 1$, $\psi =3$ then $\varphi^2+\psi^2 = 1+9=0 \pmod{5}, n = 5$. Further $2 \varphi \psi = 2\cdot 1\cdot 3 = 6 = \varepsilon = 1\pmod{5} $, $x\cdot y = x +3y \pmod{5}$.

Check.
$(x+3y)+ 3(y+3x)=y \pmod{5}$, $x+3y +3y +9x= y \pmod{5}$,  $y=y \pmod{5}$.
\end{example}

\begin{example}
Suppose we have the group  $Z_n$ of residues modulo $n$. If $\varphi = 4$, $\psi =2$ then $\varphi^2+\psi^2 = 16+4=0 \pmod{5}, n = 5$. Further $2 \varphi \psi = 2\cdot 4\cdot 2 = 16 = \varepsilon = 1\pmod{5} $, $x\cdot y = 4x +2y \pmod{5}$.

Check.
$4(4x+2y)+ 2(4y+2x)=y \pmod{5}$, $16x+8y +8y +4x= y \pmod{5}$,  $y=y \pmod{5}$.
\end{example}

\begin{example} \label{example7}
Suppose we have the group  $Z_n$ of residues modulo $n$. If $\varphi = 2$, $\psi =10$ then $\varphi^2+\psi^2 = 4+100=104= 0 \pmod{13}, n = 13$. Further $2 \varphi \psi = 2\cdot 2\cdot 10 = 40 = \varepsilon = 1\pmod{13} $, $x\cdot y = 2x +10y \pmod{13}$.

Check.
$2(2x+10 y)+ 10(2y+10 x)=y \pmod{13}$, $4x+20y +20y +100x= y \pmod{13}$,  $y=y \pmod{13}$.
\end{example}

\begin{example} \label{example8}
Suppose we have the group  $Z_n$ of residues modulo $n$. If $\varphi = 6$, $\psi =10$ then $\varphi^2+\psi^2 = 36+100=136= 0 \pmod{17}, n = 17$. Further $2 \varphi \psi = 2\cdot 6\cdot 10 = 120 = \varepsilon = 1\pmod{17} $, $x\cdot y = 6x +10y \pmod{17}$.

Check.
$6(6x+10 y)+ 10(6y+10 x)=y \pmod{17}$, $36x+60y +60y +100x= y \pmod{17}$,  $y=y \pmod{17}$.
\end{example}

\begin{example}\label{example9}
Suppose we have the group  $Z_n$ of residues modulo $n$. If $\varphi = 8$, $\psi =20$ then $\varphi^2+\psi^2 = 64+400=464= 0 \pmod{29}, n = 29$. Further $2 \varphi \psi = 2\cdot 8\cdot 20 = 320 = \varepsilon = 1\pmod{29} $, $x\cdot y = 8x +20y \pmod{29}$.

Check.
$8(8x+20 y)+ 20(8y+20 x)=y \pmod{29}$, $64x+160y +160y +400x= y \pmod{29}$,  $y=y \pmod{65}$.
\end{example}

\begin{example}
Suppose we have the group  $Z_n$ of residues modulo $n$. If $\varphi = 3$, $\psi =11$ then $\varphi^2+\psi^2 = 9+121=130= 0 \pmod{65}, n = 65$. Further $2 \varphi \psi = 2\cdot 3\cdot 11 = 66 = \varepsilon = 1\pmod{65} $, $x\cdot y = 3x +11y \pmod{65}$.

Check.
$3(3x+11 y)+ 11(3y+11 x)=y \pmod{65}$, $9x+33y +33y +121x= y \pmod{65}$,  $y=y \pmod{65}$.
\end{example}

\bigskip

\noindent \footnotesize
{  Victor Shcherbacov\\
 \lq\lq Vladimir Andrunachievici\rq\rq \\
Institute of Mathematics and Computer Science \\
 5 Academiei str., Chi\c{s}in\u{a}u  MD-2028 \\
   Moldova \\
E-mail:   \emph{scerb@math.md }}

\end{document}